\tikzstyle{test}=[regular polygon,regular polygon sides=3, draw=red!100,thin, fill=red!5, scale=0.13]
\numberwithin{equation}{section}
\newtheorem{definition}{Definition}[section]
\newtheorem{theorem}[definition]{Theorem}
\newtheorem{lemma}{Lemma}[section]
\newtheorem{remark}{Remark}[section]
\newtheorem{prop}[definition]{Proposition}
\newcommand{\be}{\begin{equation}}
\newcommand{\ee}{\end{equation}}
\begin{document}
\title {Sidewise profile control of 1-d waves}

\author{Ye\c{s}im Sara\c{c} \thanks{Department of Mathematics, Atat\"{u}rk University, 25240 Erzurum, Turkey. (ysarac@atauni.edu.tr)} \, and Enrique Zuazua \thanks{Chair of Dynamics, Control and Numerics,  Alexander von Humboldt-Professorship, Department of Data Science, Friedrich-Alexander University, Erlangen-N\"{u}rnberg, 91058 Erlangen, Germany.\newline (enrique.zuazua@fau.de)} 
\thanks{Chair of Computational Mathematics, Fundaci\'{o}n Deusto, University of Deusto, 48007 Bilbao, Basque Country, Spain.} 
\thanks{Departamento de Matem\'{a}ticas, Universidad Aut\'{o}noma de Madrid, 28049 Madrid, Spain. } 
 ~  }

\date{}
\maketitle

\begin{abstract}
We analyze the sidewise controllability for the variable coefficients one-dimensional wave equation. The control is acting on one extreme of the string with the aim that  the solution tracks a given path or profile at the other free end. This sidewise profile control problem is also often referred to as nodal profile or tracking control. The problem is reformulated as a dual observability property for the corresponding adjoint system, which is proved by means of sidewise  energy propagation arguments  in a sufficiently large time, in the class of $BV$-coefficients. We also present a number of open problems and perspectives for further research.
\bigskip

\hspace{-0.2in}\textbf{Key words}~  1-d wave equations, sidewise profile controllability, nodal profile control, BV-coefficients, sidewise observability, sidewise energy estimates.

\bigskip
\hspace{-0.2in}\textbf{2020 Mathematics Subject Classification.}~Primary: 35L05, 93B05. \\

(Communicated by Boris S. Mordukhovich)
\end{abstract}

\section{Introduction}
Consider the following variable coefficients controlled 1-d wave equation:  
\begin{equation}\label{e1}
\left\{~
\begin{aligned}
&\rho (x)y_{tt}-(a(x)y_{x})_{x}=0,  && 0< x< L,~ 0< t<T\\
&y(x,0)=y_0(x), ~y_{t}(x,0)=y_1(x),&& 0< x< L\\
&y(0,t)=u(t), ~ y(L,t)=0,                 &&0<t<T.
\end {aligned}
\right.
\end{equation} \\
In \eqref{e1}, $0<T<\infty$ stands for the length of the time-horizon, $L$ is the length of the string where waves propagate, $y=y(x,t)$ is the state and $u=u(t)$ is a control that acts on the system through the extreme $x = 0$.\\

We assume that the coefficients $\rho$ and $a$ are in $BV$  and to be uniformly bounded above and  below by positive constants, \textit {i.e.}
\begin{equation}\label{e42}
0<\rho_0 \le \rho (x) \le \rho_1,~~0<a_0 \le a (x) \le a_1 ~~ \text{a.e. in} ~~(0,L)
\end{equation}
and
\begin{equation}\label{e43}
\rho, a \in BV(0,L).
\end{equation}

The main goal of this paper is to analyze the sidewise boundary controllability of \eqref{e1}. More precisely, we want to solve the following problem: {\it Given a time-horizon $T>0$,  initial data $y_0(x)$,  $y_1(x)$ and a target $p(t)$ for the flux at $x=L$, to find $u(t)$ such that the corresponding solution of the system \eqref{e1} satisfies 
\begin{equation}\label{e2}
y_{x}(L,t)=p(t), \quad t \ge 0
\end{equation}
in a time-subinterval of $[0, T]$ to be made precise and under suitable conditions on $T$, according to the velocity of propagation of waves.
}

In other words, the string being fixed at the right extreme $x=L$ and the control $u=u(t)$ acting on the left-extreme $x=0$, given a profile $p=p(t)$, we want to choose the control so that the tension of the string at $x=L$, namely $y_x(L, t)$, tracks the profile $p=p(t)$. This property will be therefore referred to as sidewise profile controllability or, simply, as sidewise controllability.

Note, however, that, because of the finite-velocity of propagation one does not expect this result to hold for all $T>0$, but rather only for $T>\tau$ large enough, so that the action of the control at $x=0$ can reach the other extreme $x=L$ along characteristics, $\tau >0$ being this waiting. For the same reasons, one does not expect the condition $y_{x}(L,t)=p(t)$ to hold only for $t\ge \tau$.

This is a non-standard controllability problem since, most often, controllability refers to the possibility of steering the solution to a target in the final time $t=T$. But here the aim is rather to assure that a given trace, the given profile $p=p(t)$,  is achieved on the boundary. 

There is an extensive literature on the controllability of wave-like equations  (see, for instance,  \cite{L1, L2, M-E, XE, E2}). Most techniques to handle these problems rely on the dual observability problem for the adjoint uncontrolled wave equation, that is then addressed using methods such as multipliers, microlocal analysis or Carleman inequalities, among others. The one-dimensional case is particularly well understood and sharp results can be obtained using sidewise energy estimates in the class of $BV$-coefficients of bounded variation, \cite{Cara}.  In that context, $BV$ is the minimal requirement on the regularity of the coefficients since counterexamples can be built in the class of H\"older continuous ones \cite{CZ}. In case coefficients are slightly less regular than $BV$ one may obtain weaker controllability properties in the sense that initial data to be controlled  needs to be smoother than expected according to the $BV$ frame, \cite{FZ1}. 

Sidewise control problems have also been previously investigated. For instance, motivated by applications on gas-flow on networks, Gugat et al. \cite {Gugat} proposed the so-called  nodal profile control problem,  the goal being  to assure that the state fits a given profile on one or some nodes of the network, after a waiting time, by means of boundary controls. In \cite{Li1}, \cite{Libook}, \cite{Wang2} and \cite{ZLL} this analysis was extended  to 1-D quasilinear hyperbolic systems by a constructive method employing the method of characteristics. 

In this paper we address this sidewise or nodal-controllability problem in the context of the 1-D wave equation above. Our two main contributions are as follows:
\begin{itemize}
\item The first one is of a methodological nature. We introduce the dual version of this problem, which leads to a non-standard observability inequality for the adjoint wave equation, involving  a non-homogeneous boundary condition at $x=L$, that needs to be estimated out of measurements done at $x=0$. This is inspired on the classical duality approach to exact controllability as introduced in \cite{L1} (see also \cite{M-E}).

As we shall see, this duality method can be applied also in several space dimensions and for other models, such as heat or Schr\"odinger equations, or when defined on networks. The nodal or profile controllability is therefore systematically reduced to proving suitable observability estimates.

\item In the particular case under consideration \eqref{e1}, we prove these observability inequalities using sidewise energy estimates.
 \end{itemize}

The combination of these two contributions allows for a rather complete understanding of the problem under consideration for 1-D waves. But the development of techniques allowing to handle the corresponding observability inequalities in the multi-dimensional context seems to be a challenging problem.

The parabolic companion of this analysis has been recently developed in \cite{Barcena} where the sidewise profile controllability of the 1-D heat equation is solved using the same duality principle and  flatness methods, \cite{MRR}. In the parabolic setting, contrarily to the wave models considered in this paper, results hold in an arbitrary  small time horizon. 

Summarizing, the main sidewise controllability result is as follows (see Figure \ref{fig:fig1}):
\begin{theorem}
Let us consider system \eqref{e1} with coefficients satisfying the assumptions
\eqref{e42} and \eqref{e43}.

Let $T>L\beta$ with
\begin{equation}\label{e47}
\beta=\operatorname{ess}\sup\limits_{x\in[0, L]}\sqrt\frac{\rho}{a}.
\end{equation}

Then, for any $p\in H^{-1}_*(L\beta, T)$ there exists a control $u\in L^2(0, T)$  such that the solution of \eqref{e1} satisfies
$$
y_{x}(L,t)=p(t) ~~ \text{for all}~~ t \in (L\beta,T).
$$
\end{theorem}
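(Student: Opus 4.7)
My overall strategy is the Hilbert Uniqueness Method (HUM) advertised in the introduction: reduce the existence of the control $u$ to a dual observability inequality for a suitably chosen adjoint wave equation, and then prove that observability inequality by sidewise energy estimates.

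\textbf{Step 1: HUM setup and fundamental identity.} I take an adjoint function $\phi$ solving the same wave equation as $y$, with boundary data $\phi(0,t)=0$, $\phi(L,t)=q(t)$ where $q\in H^1_0(L\beta,T)$ is a test profile (extended by zero to $(0,T)$), and with zero final data $\phi(x,T)=\phi_t(x,T)=0$. By time-reversibility this uniquely determines the initial data $\phi_0:=\phi(\cdot,0)$, $\phi_1:=\phi_t(\cdot,0)$ from $q$. Multiplying \eqref{e1} by $\phi$ and integrating by parts over $(0,L)\times(0,T)$, using the boundary/final conditions on $\phi$ and the boundary/initial conditions on $y$, I obtain the identity
\[
a(0)\int_0^T u(t)\phi_x(0,t)\,dt \;=\; \int_0^L \rho\bigl(y_0\phi_1 - y_1\phi_0\bigr)\,dx \;-\; a(L)\int_{L\beta}^T p(t)\,q(t)\,dt,
\]
where the constraint $y_x(L,t)=p(t)$ on $(L\beta,T)$ enters precisely because $q$ is supported there.

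\textbf{Step 2: Reduction to a sidewise observability inequality.} The identity says that a control $u\in L^2(0,T)$ achieving the sidewise target exists if and only if the right-hand side, viewed as a linear functional $\mathcal{L}(q)$ of $q$, is continuous with respect to the seminorm $\|\phi_x(0,\cdot)\|_{L^2(0,T)}$; in that case, Riesz representation together with the minimization of $J(q)=\frac12\int_0^T|\phi_x(0,t)|^2\,dt - \mathcal{L}(q)$ produces a unique $q^\ast$, whose corresponding adjoint $\phi^\ast$ yields the control $u(\cdot)=\phi^\ast_x(0,\cdot)$ (up to the constant $a(0)$). Continuity of $\mathcal{L}$ reduces, by inspection of the two terms, to the sidewise observability inequality
\[
\|q\|_{H^1_0(L\beta,T)}^2 + \|(\phi_0,\phi_1)\|_{H^1_0(0,L)\times L^2(0,L)}^2 \;\le\; C\int_0^T |\phi_x(0,t)|^2\,dt,
\]
valid for every admissible $\phi$. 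Note that the functional setting $p\in H^{-1}_*(L\beta,T)$ pairs exactly with $q\in H^1_0(L\beta,T)$.

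\textbf{Step 3: Sidewise energy estimates.} To prove this observability, I would read the wave equation sideways, exchanging the roles of $t$ and $x$, and using $(a\phi_x)_x=\rho\phi_{tt}$. On time windows adapted to the characteristic cone emanating from $x=L$ -- which, since $T>L\beta$, has a non-trivial trace on $\{x=0\}$ -- define the sidewise energy
\[
\mathcal{E}(x) := \tfrac{1}{2}\int \bigl(\rho(x)\,\phi_t(x,t)^2 + a(x)\,\phi_x(x,t)^2\bigr)\,dt.
\]
Differentiating in $x$ and using the PDE produces $|\mathcal{E}'(x)|\le K\,\mathcal{E}(x)$ with $K$ depending only on $\rho_0,\rho_1,a_0,a_1$ and $\|\rho\|_{BV}+\|a\|_{BV}$. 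A Gr\"onwall argument in $x$ gives $\mathcal{E}(0)\asymp\mathcal{E}(L)$; since $\phi(0,t)=0$, one has $\mathcal{E}(0)=\tfrac12 a(0)\int|\phi_x(0,t)|^2\,dt$, while $\mathcal{E}(L)$ controls $\|q\|_{H^1_0}^2$ through the $\rho(L)\,q'(t)^2$ term. A further integration in $x$, combined with standard time-wise energy conservation, supplies the missing piece $\|(\phi_0,\phi_1)\|^2$.

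\textbf{Main obstacle.} The crux will be carrying out the sidewise Gr\"onwall estimate under only $BV$ regularity: because $\rho_x$ and $a_x$ are merely finite Radon measures, the pointwise chain rule fails and must be replaced by a careful BV product/chain-rule identity, in the spirit of \cite{Cara}. A second, more geometric subtlety is that the sharp constant $\beta=\operatorname{ess\,sup}\sqrt{\rho/a}$ -- the worst-case reciprocal wave speed, not an average -- appears precisely because the sidewise propagation is dictated by the slowest characteristic; the time windows entering the definition of $\mathcal{E}(x)$ must be calibrated exactly to $L\beta$ for the cone-of-dependence argument to close and yield the observability on the full interval $(0,T)$.
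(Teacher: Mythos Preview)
Your proposal is correct and follows essentially the same route as the paper: HUM duality reduces sidewise controllability to a sidewise observability inequality for an adjoint wave equation with nonhomogeneous Dirichlet datum $q$ at $x=L$ and vanishing data at $t=T$, and that inequality is then proved by a sidewise energy estimate with Gr\"onwall in $x$ under the $BV$ hypothesis.

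Two technical points where the paper proceeds more cleanly than your sketch. First, the paper reduces at the outset to $y_0\equiv y_1\equiv 0$ by linear superposition, so the functional $J$ and the required observability involve only $\|q\|_{H^1}$ and not $\|(\phi_0,\phi_1)\|$; your appeal to ``standard time-wise energy conservation'' to recover the latter is not quite valid, since the energy of $\phi$ is \emph{not} conserved on the interval where $\phi(L,t)=q(t)\neq 0$. Second, and more important for the estimate itself, the paper's device to make the sidewise energy argument close is to extend $\phi$ by even reflection across $t=T$ to $[0,2T]$ (using the vanishing final data), and then take $\mathcal{E}(x)=\tfrac12\int_{\beta x}^{2T-\beta x}[\rho\phi_t^2+a\phi_x^2]\,dt$; the shrinking time window is what absorbs the endpoint terms in $\mathcal{E}'(x)$ via $|\rho\phi_t\phi_x|\le\tfrac{\beta}{2}(\rho\phi_t^2+a\phi_x^2)$ and is precisely where the constant $\beta=\operatorname{ess\,sup}\sqrt{\rho/a}$ enters. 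Your ``time windows adapted to the characteristic cone'' points in this direction but leaves the mechanism implicit. Finally, note the paper's test space is $H^1_*(0,T)$ (vanishing on $(0,L\beta)$ but not at $t=T$), not $H^1_0(L\beta,T)$; this matches the stated target class $p\in H^{-1}_*(L\beta,T)$.
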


\begin{remark} \label{clardual} The space $H^{-1}_*(L\beta, T)$ stands for the dual of $H^{1}_*(0,T)$ which is constituted by the subspace of the space $H^{1}(0,T)$ constituted by the functions vanishing in the time sub-interval $(0, L\beta)$. The dual is taken with respect to the pivot space $L^2(0,T)$ .

Note that, in particular, every distribution of the form 
\begin{equation}\label{particularform}
p=\frac{d}{dt} (\varphi(t) q(t))
\end{equation} 
where $q \in L^2(L\beta, T)$,  $\varphi$ being a smooth function vanishing at $t=T$, belongs to the dual $H^{-1}_*(L\beta, T)$.

The duality pairing between $H^{1}_*(0,T)$ and $H^{-1}_*(L\beta, T)$ will be denoted by  $<\cdot, \cdot>_*$.  In the particular case in which $p$ is of the form \eqref{particularform} the duality pairing can be rewritten as follows:
\begin{equation}
<s_0, p>_*=<s_0(t),d (\varphi(t) q(t))/dt>_* =- \int_{L\beta}^T s_0'(t) \varphi(t) q(t) dt.
\end{equation}

\end{remark}

\begin{figure}[H]
	\centering
		\includegraphics[width=0.45\textwidth]{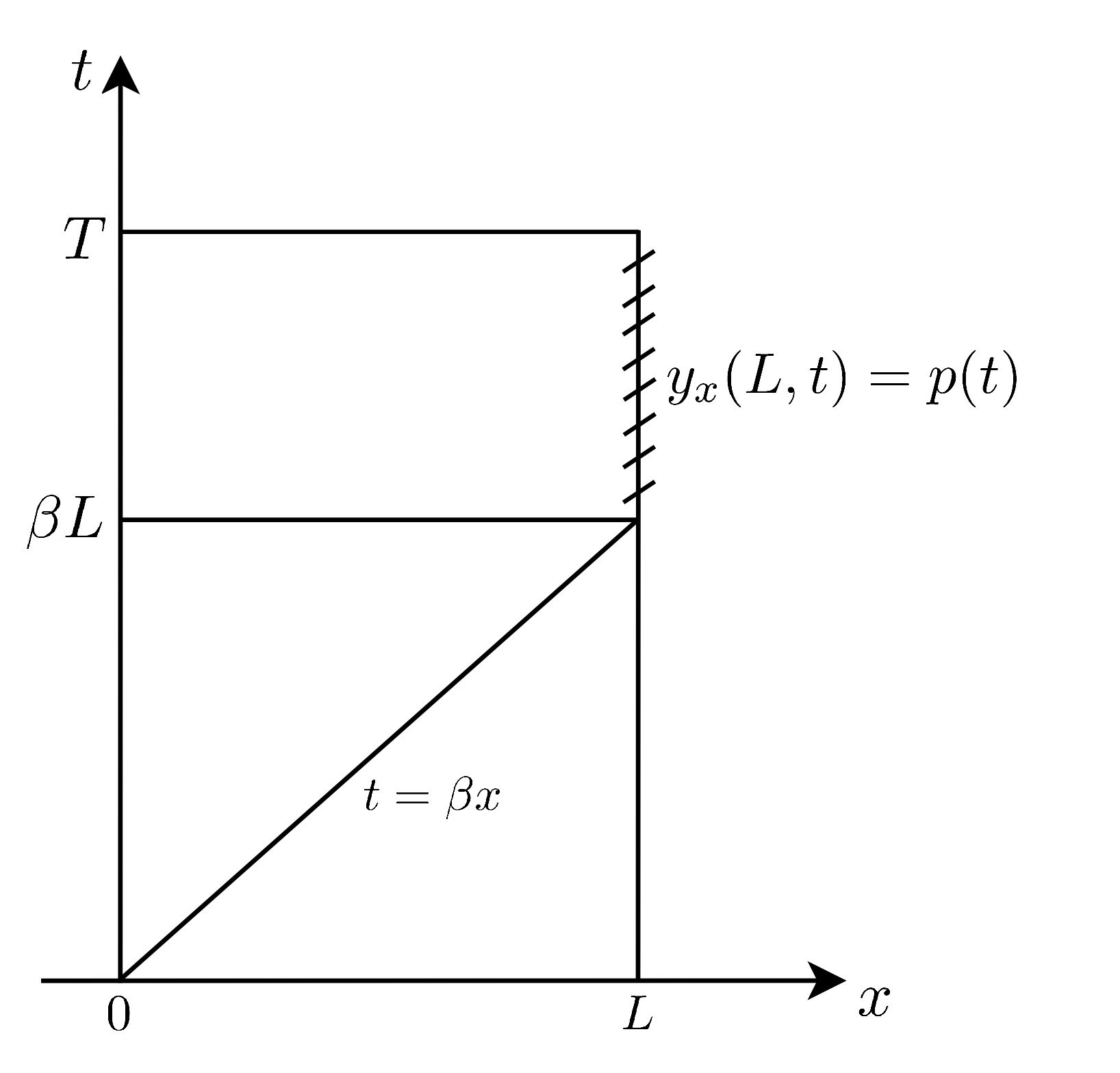}
	\caption{Sidewise controllability: The figure, which corresponds to the constant coefficient case, represents the time needed for characteristics emanating from $x=0$ to reach $x=L$, and to assure that the control of the trace to the given profile $p=p(t)$ is achieved for $T\ge L\beta$. The situation is similar for variable coefficients. The minimal time in that case corresponds to the arrival time of the curved characteristic emanating from $x=0$ at $t=0$ to reach the other extreme $x=L$.}
	\label{fig:fig1}
\end{figure}

This paper is organized as follows.  In Section 2 we present the dual sidewise observability inequality problem for the adjoint system.  We also present the main sidewise controllability result and show what the dual equivalent in terms of sidewise observability is. Section 3 is devoted to proving the relevant sidewise controllability inequality. In Section 4 we discuss the constant coefficient case showing how the same result can be achieved by the method of characteristics. We investigate the sidewise control problem for the multi-dimensional wave equation in Section 5. We conclude in Section 6 with some conclusions and some open problems for future research.

\section{The dual sidewise observability problem} 

Let us consider system \eqref{e1} with coefficients satisfying the assumptions
\eqref{e42} and \eqref{e43}.

For any given $(y_0,y_1)$ with $y_0 \in L^2(0,L)$ and $\rho y_1 \in H^{-1}(0,L)$ and any $u \in L^2(0,T)$, system \eqref{e1} admits a unique solution $y$, defined by transposition (see \cite {Cara}), enjoying the regularity property
\begin{equation}\label{transpo}
y \in C([0,T];L^2(0,L)), ~~ \rho y_{t} \in C([0,T];H^{-1}(0,L)).
\end{equation}

In this functional setting the sidewise controllability problem can be formulated more precisely as follows: {\it Given a finite time $T>L\beta$ and $p \in H^{-1}_*(L\beta,T)$ we aim to determine $u \in L^2(0,T)$ such that the solution $y$ of system \eqref{e1} satisfies the condition
\begin{equation}\label{e31}
y_{x}(L,t)=p(t) ~~ \text{for all}~~ t \in (L\beta,T).
\end{equation}}

\begin{remark}
Several remarks are in order:
\begin{itemize}
\item 
Note that this problem makes sense since solutions of \eqref{e1} in the above regularity class fulfill the added boundary regularity condition 
\begin{equation}\label{boundreg}
y_{x}(L,t) \in H^{-1}(0, T).
\end{equation}

\item Note also that in the present formulation of the sidewise controllability problem the velocity of propagation plays an important role. On one hand the sidewise controllability property is only guaranteed when $T>L\beta$. This is the natural minimal time to achieve such a result since, otherwise, because of the finite velocity of propagation, the action on $x=0$ will not reach the extreme $x=L$. On the other hand, the tracking condition is only assured in the time sub-interval $(L\beta,T)$.
\item Without loss of generality, using the principle of additive superposition of solutions of linear problems, the problem can be reduced to the particular case $y_0(x)\equiv y_1(x)\equiv 0$. 
\end{itemize}
\end{remark}

Let us now consider the adjoint system:
\begin{equation}\label{e45}
\left\{~
\begin{aligned}
&\rho (x)\psi_{tt}-(a(x)\psi_{x})_{x}=0, && 0< x< L,~ 0< t<T\\
&\psi(x,T)=0, ~ \psi_{t}(x,T)=0,            && 0< x<L\\
&\psi(0,t)=0, ~ \psi(L,t)=s(t),                && 0< t<T
\end {aligned}
\right.
\end{equation} \\
where the boundary data are of the form 
 \begin{equation}\label{e46}
s(t)=\left\{
\begin{aligned}
&s_0(t), ~~~ L\beta\le  t \le T\\
&0, ~~~~~~~~ 0\le t \le L\beta\\
\end{aligned}
\right.
\end{equation}
with $s_0\in H^1(L\beta, T)$, $s_0(L\beta)=0$.\\

This system admits an unique finite-energy solution $\psi$ such that 
\begin{equation*}
(\psi, \frac{\partial \psi}{\partial {t}}) \in C([0,T],H^{1}(0,L) \times L^2(0,L))
\end{equation*}
and
\begin{equation*}
\psi_{x}(0,.) \in L^2(0,L). 
\end{equation*}

The sidewise  observability inequality that, as we shall see, this adjoint system fulfills, and which is equivalent to the sidewise controllability problem under consideration, is the following:
\begin{prop}\label{pro41}
Let $T>L\beta$ ($\beta$ being given as in \eqref{e47}). 

Then, there exists $C_1>0$ such that the observability inequality
\begin{equation}\label{e48}
\left \Vert s_0(t)\right\Vert
_{H^{1}\left(L\beta,T\right) } \leq C_1 \left \Vert \psi_{x}(0,t)\right\Vert
_{L_{2}\left(0,T\right) } 
\end{equation}
is satisfied for every finite energy solution of \eqref{e45}-\eqref{e46}, i.e. for all $s_0 \in H^{1}(L\beta,T)$ such that $s_0(L\beta)=0$.
\end{prop}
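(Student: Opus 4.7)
The plan is to establish \eqref{e48} by a sidewise energy propagation argument, transporting information from the observation side $x=0$ to the target side $x=L$ along the straight line $t=\beta x$, which by definition of $\beta$ uniformly majorizes the pointwise characteristic slope $\sqrt{\rho(x)/a(x)}$ on $[0,L]$.

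Concretely, I would introduce the sidewise energy
\begin{equation*}
E(x)=\tfrac12\int_{\beta x}^{T}\bigl[\rho(x)\psi_t^2(x,t)+a(x)\psi_x^2(x,t)\bigr]\,dt,\qquad x\in[0,L],
\end{equation*}
whose endpoint values are exactly the quantities appearing in \eqref{e48}. Indeed, since $\psi(0,t)\equiv 0$ forces $\psi_t(0,t)\equiv 0$, we have $E(0)=\tfrac12 a(0)\,\|\psi_x(0,\cdot)\|_{L^2(0,T)}^2$, while $\psi(L,t)=s_0(t)$ on $(L\beta,T)$ gives $E(L)\ge \tfrac{\rho_0}{2}\,\|s_0'\|_{L^2(L\beta,T)}^2$. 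Because $s_0(L\beta)=0$, Poincar\'e's inequality on $(L\beta,T)$ upgrades any control of $\|s_0'\|_{L^2}$ into control of the full $H^1(L\beta,T)$ norm, so the proof reduces to the one-sided bound $E(L)\le C\,E(0)$.

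To derive this bound I would compute $E'(x)$ by differentiation under the integral, use the equation in the form $a\psi_{xx}=\rho\psi_{tt}-a'\psi_x$, and integrate by parts in $t$. The terminal data $\psi(x,T)=\psi_t(x,T)=0$ kill the contribution at $t=T$, leaving
\begin{equation*}
E'(x)=-\tfrac12\bigl[\beta\rho\psi_t^2+2\rho\psi_t\psi_x+\beta a\psi_x^2\bigr]_{t=\beta x}+\tfrac12\int_{\beta x}^{T}\bigl(\rho'\psi_t^2-a'\psi_x^2\bigr)\,dt.
\end{equation*}
The bracket is a non-negative quadratic form in $(\psi_t,\psi_x)$, its discriminant $4\rho(\rho-\beta^2 a)$ being non-positive precisely by the choice of $\beta$, so the boundary term may be dropped from the upper bound. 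The interior term is then controlled by the pointwise estimate $|\rho'|\psi_t^2+|a'|\psi_x^2\le g(x)\,(\rho\psi_t^2+a\psi_x^2)$ with $g(x)=|\rho'(x)|/\rho_0+|a'(x)|/a_0$, giving $E'(x)\le g(x)\,E(x)$. In the $BV$ setting $g$ is a finite positive measure of total mass at most $\mathrm{TV}(\rho)/\rho_0+\mathrm{TV}(a)/a_0$, so Gronwall's inequality yields $E(L)\le E(0)\exp\bigl(\int_0^L g\bigr)$, which closes the reduction and produces the constant $C_1$.

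The main obstacle I expect is the rigorous justification of the identity for $E'(x)$ when $\rho$ and $a$ are merely $BV$: the products $\rho'\psi_t^2$ and $a'\psi_x^2$ pair Radon measures in $x$ with functions that are only $L^2$ on horizontal slices, so neither the sidewise integration by parts nor the Gronwall step is entirely formal. The natural remedy is to regularize $(\rho,a)$ by $C^1$ approximants preserving the lower bounds $\rho_0,a_0$, the upper bounds $\rho_1,a_1$ and the total variations, run the argument classically in the smooth case, and pass to the limit using the continuous dependence of transposition solutions on the coefficients established in \cite{Cara}; a secondary technical point is to confirm the trace regularity $\psi_t(L,\cdot)\in L^2(L\beta,T)$ needed to interpret $E(L)$, which follows from an analogous sidewise trace estimate.
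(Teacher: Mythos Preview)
Your argument is correct and follows the same sidewise energy propagation strategy as the paper, with two streamlining differences worth noting. First, the paper extends $\psi$ by parity across $t=T$ to the interval $[0,2T]$ and works with the symmetric energy $F(x)=\tfrac12\int_{\beta x}^{2T-\beta x}\bigl[\rho\psi_t^2+a\psi_x^2\bigr]\,dt$, so that the boundary contributions at \emph{both} endpoints are disposed of by the same quadratic-form sign argument you use at $t=\beta x$; you instead exploit the vanishing terminal data $\psi(\cdot,T)=\psi_t(\cdot,T)=0$ directly to kill the upper boundary term in $E'(x)$, which bypasses the reflection step entirely. Second, to recover the $L^2$ part of $\|s_0\|_{H^1(L\beta,T)}$ the paper integrates the pointwise bound $F(x)\le C\,F(0)$ over $x\in[0,L]$ and uses $s_0(t)=\psi(L,t)=\int_0^L\psi_x(x,t)\,dx$, whereas you invoke the Poincar\'e inequality afforded by $s_0(L\beta)=0$. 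Both routes deliver \eqref{e48} with an explicit constant; yours is slightly more economical, while the paper's makes the dependence on $L$ rather than on $T-L\beta$ appear in the $L^2$ piece. Your closing remarks on the $BV$ regularization step and the trace regularity $\psi_t(L,\cdot)\in L^2$ are apt; the paper handles these by reference to the sidewise estimates of \cite{Cara} without spelling out the approximation.
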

\begin{remark}
All terms in the inequality \eqref{e48} make sense. In fact, the sidewise energy estimates in \cite{Cara} allow showing that there exists a constant $C_2=C_2(\rho,a,T)>0$ such that the finite energy solution $\psi$ \eqref{e45} satisfies 
\begin{equation}\label{e91}
 \left \Vert \psi_{x}(0,t)\right\Vert
_{L_{2}\left(0,T\right) } \leq C_2 \left \Vert s_0(t)\right\Vert
_{H^{1}\left(L\beta,T\right) }
\end{equation}
for all $s_0 \in H^{1}(L\beta,T)$ such that $s_0(L\beta)=0$.

\end{remark}

This observability inequality \eqref{e48} is equivalent to the sidewise controllability property.
In fact, out of the observability property above, one can obtain the sidewise control of minimal $L^2(0, T)$-norm by a variational principle that we present now.

 Let us consider the continuous and strictly convex quadratic functional
 \begin{equation*}
J:H^{1}_*(0,T) \longrightarrow \mathbb{R}
\end{equation*}
defined as
\begin{equation}\label{e410}
J(s_0)=\frac{1}{2}\int_{0}^{T}[(a\psi_{x})(0,t)]^2dt-a(L) < s_0(t), p(t)>_*
\end{equation}
for  $s_0\in H^{1}_*(0,T)$ (see Remark \ref{clardual} above), where $\psi$ is the solution of the adjoint system \eqref{e45}.

By $<\cdot, \cdot>_*$, in \eqref{e410}, we denote the  duality pairing between $s_0(t) \in H^{1}_*(0,T)$ and its dual $p(t) \in H^{-1}_*(L\beta,T)$ as described in Remark \ref{clardual}. To simplify the notation we will sometimes simply denote it as $\int_{0}^{T}s_0(t)p(t)dt$. But it should be taken in mind that the rigorous interpretation is given by the duality pairing. 

The observability inequality above guarantees that the functional is also coercive. The Direct method of the Calculus of Variations then ensures that $J$ has a unique minimizer.

The following lemma states that the minimum of the functional $J$ provides the desired sidewise control.
\begin{lemma}\label{lemma1}
Suppose that  $\bar s_0 \in H^{1}_*(0,T)$ is the unique minimizer of $J$ in this space. If $\bar \psi$ is the solution of the adjoint system \eqref{e45} corresponding to the minimizer $\bar s_0 $ as boundary datum, then
\begin{equation}\label{e49}
u(t)=-a(0)\bar \psi_{x}(0,t)
\end{equation}
is a control such that 
\begin{equation}
y_{x}(L,t)=p(t), \quad L\beta\le  t\le T,
\end{equation}
when the initial data $y_0\equiv y_1\equiv 0$.
\end{lemma}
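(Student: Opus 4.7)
The plan is to combine the Euler--Lagrange equation for the minimizer $\bar s_0$ of $J$ with a duality identity (integration by parts) between the forward solution $y$ and the adjoint solution $\psi$, in the spirit of the classical Hilbert Uniqueness Method. I will denote by $\psi^{s_0}$ the solution of \eqref{e45}--\eqref{e46} corresponding to a boundary datum $s_0\in H^{1}_*(0,T)$, so that $\bar\psi=\psi^{\bar s_0}$.

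First, I would write the optimality condition $\langle J'(\bar s_0), s_0\rangle=0$ for all $s_0\in H^{1}_*(0,T)$. Since $s_0\mapsto \psi^{s_0}_x(0,\cdot)$ is linear, the Gateaux derivative is
$$
\langle J'(\bar s_0), s_0\rangle
=\int_0^{T} a(0)^2\,\bar\psi_x(0,t)\,\psi^{s_0}_x(0,t)\,dt - a(L)\,\langle s_0,p\rangle_{*},
$$
and introducing the candidate control $u(t)=-a(0)\bar\psi_x(0,t)$ this optimality condition becomes
\begin{equation}\label{EL}
-\int_0^{T} a(0)\,u(t)\,\psi^{s_0}_x(0,t)\,dt \;=\; a(L)\,\langle s_0,p\rangle_{*}\qquad \forall\, s_0\in H^{1}_*(0,T).
\end{equation}

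Next, I would multiply the equation $\rho y_{tt}-(a y_x)_x=0$ satisfied by $y$ (with the control $u$ above and zero initial data) by $\psi^{s_0}$ and integrate over $(0,L)\times(0,T)$. Because $\psi^{s_0}$ solves the same wave equation, the bulk terms cancel; the contributions at $t=0$ vanish by $y_0\equiv y_1\equiv 0$ and those at $t=T$ vanish by the terminal conditions of \eqref{e45}. The spatial boundary contributions at $x=0$ and $x=L$ yield the transposition identity
$$
-\int_0^{T} a(0)\,u(t)\,\psi^{s_0}_x(0,t)\,dt \;=\; a(L)\,\langle s_0, y_x(L,\cdot)\rangle_{*},
$$
where the right-hand pairing is interpreted in the $H^{1}_*/H^{-1}_*$ sense of Remark \ref{clardual} (this is legitimate since $y_x(L,\cdot)\in H^{-1}(0,T)$ and $s_0$ vanishes on $(0,L\beta)$). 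Subtracting this identity from \eqref{EL} gives
$$
a(L)\,\langle s_0,\, y_x(L,\cdot)-p\rangle_{*}=0\qquad \forall\, s_0\in H^{1}_*(0,T),
$$
which means that, as an element of $H^{-1}_*(L\beta,T)$, the trace $y_x(L,\cdot)$ coincides with $p$ on $(L\beta,T)$. This is the desired sidewise tracking condition.

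The main obstacle is the rigorous justification of the duality identity, since $y$ is only a transposition solution: $y_x(L,\cdot)$ is not pointwise defined but only lives in $H^{-1}(0,T)$, and $u$ is only $L^2$. The cleanest way to handle this is either (i) to regularize $u$ (and $s_0$) by density, apply the classical integration by parts for smooth solutions, and pass to the limit using the continuous dependence estimates of \cite{Cara} together with \eqref{e91}; or (ii) to adopt the transposition identity itself as the definition of $y_x(L,\cdot)$, which is precisely how the boundary trace \eqref{boundreg} is constructed. Either route reduces the argument to bookkeeping, once one checks that all the pairings are well-posed in the functional setting introduced in Remark \ref{clardual}.
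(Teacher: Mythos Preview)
Your argument is correct and follows essentially the same route as the paper: write the Euler--Lagrange equation for $J$ at $\bar s_0$, derive the duality identity by multiplying \eqref{e1} by $\psi^{s_0}$ and integrating by parts, and compare the two to conclude $y_x(L,\cdot)=p$ on $(L\beta,T)$. Your extra paragraph on justifying the transposition identity by density or by definition is a welcome clarification that the paper leaves implicit.
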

\begin{remark} As mentioned above, once the control is built for $y_0\equiv y_1\equiv 0$, using the linear superposition of solutions of the wave equation, the control for arbitrary initial data can be built. The functional $J$ above can be also modified so to lead directly the control corresponding to non-trivial initial data.
\end{remark} 

\noindent \begin{proof}
Since $J$ achieves its minimum at  $\bar s_0$, the Gateaux derivative of $J$ vanishes at that point and, in other words, 
\begin{equation}\label{e37}
\begin{aligned}
&0=\lim_{h \to 0} \frac{1}{h} (J(\bar s_0+hs_0)-J(\bar s_0) )\\
&~=\int_{0}^{T}(a(0))^2\bar \psi_{x}(0,t)\psi_{x}(0,t)dt-a(L)\int_{L\beta}^{T}s_0(t)p(t)dt
\end {aligned}
\end{equation}
for any other $s_0 \in H^{1}_*(0,T)$, where $ \psi$ stands for the solution of adjoint system \eqref{e45} with $s_0$ as boundary datum.

On the other hand, multiplying the equation \eqref{e1} by the solution $\psi(x,t)$ of the adjoint problem and integrating on $(0,L)\times (0,T)$ we get (recall that, without loss of generality, we have assumed that $y_0(x)\equiv y_1(x)\equiv 0$)
\begin{equation}\label{e38}
\int_{0}^{T}a(0)u(t)\psi_{x}(0,t)dt+a(L)\int_{L\beta}^{T}s_0(t)y_{x}(L,t)dt=0.
\end{equation}

Comparing \eqref{e37} and \eqref{e38} we get that $u(t)=-a(0)\bar \psi_{x}(0,t)$  is a control which leads $y_{x}(x,t)$ to $p(t)$ on $x=L$ for all $L\beta< t<T$.
\end{proof}

\begin{remark} 
Using classical arguments it can be seen that the control obtained by this minimization method is the one of minimal $L^2(0, T)$-norm (see \cite{M-E}).
\end{remark}

\rm{\section{Proof of the sidewise observability inequality}

Let us now proceed to the proof of Proposition \ref{pro41}.}

In order to prove the observability inequality \eqref{e48}, we will use the sidewise energy estimates (see \cite{E1, Cara}) (see also \cite{CazHar}, \cite{Symes} for other applications of this technique).

We have $ T>L\beta$ and the final data at $t=T$ for $\psi$ vanish. This allows to extend the solution $\psi$ by parity or reflection with respect to $t=T$ to the interval $[0, T'=2T]$ (see Figure \ref{fig:fig2}). In this way $T'>2L\beta$.  

\begin{figure}[H]
	\centering
		\includegraphics[width=0.45\textwidth]{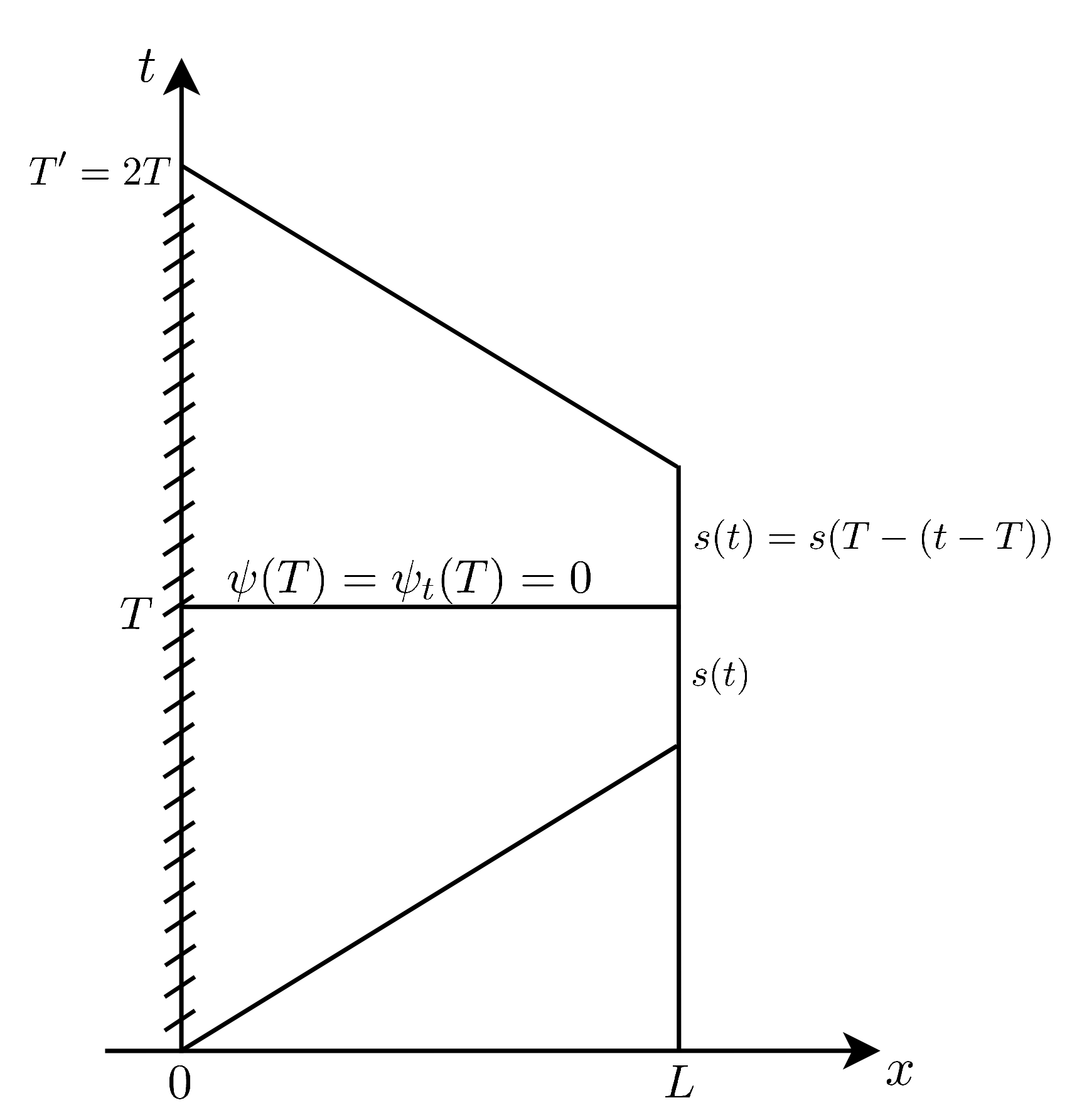}
	\caption{Extension of the solution $\psi$ (by parity with respect to $t=T$) to the interval $[0,T'=2T]$}
	\label{fig:fig2}
\end{figure}

We now proceed to perform the sidewise energy estimates in this extended time interval $[0, T'=2T]$. 
Define
\begin{equation*}
F(x)=\frac{1}{2}\int_{\beta x}^{T'-\beta x}[\rho (x) (\psi_{t}(x,t))^2+a(x)(\psi_{x}(x,t))^2]dt, ~~~~\forall x \in [0,L].
\end{equation*}
Because of the boundary condition $\psi(0,t)=0$, we have
\begin{equation*}
F(0)=\frac{a(0)}{2}\int_{0}^{T'}(\psi_{x}(0,t))^2dt  = a(0) \int_{0}^{T}(\psi_{x}(0,t))^2dt.
\end{equation*}
Let us compute the derivative of $F$ with respect to $x$:
\begin{equation}\label{e411}
\begin{aligned}
&\frac{dF(x)}{dx}=-\frac{\beta}{2} [\rho \psi_{t}^2+a\psi_{x}^2] \Big \vert_{t=T'-\beta x} -\frac{\beta}{2} [\rho \psi_{t}^2+a\psi_{x}^2] \Big \vert_{t=\beta x}\\
&~~~~~~~~~+\int_{\beta x}^{T'-\beta x} \left ( \rho (x) \psi_{t}\psi_{tx}+a(x)\psi_{x}\psi_{xx}+\frac{\rho'(x)}{2} \psi_{t}^2+\frac{a'(x)}{2}\psi_{x}^2 \right ) dt. 
\end{aligned}
\end{equation}
Integrating by parts and using the equation \eqref{e45} we have
\begin{equation*}
\begin{aligned}
&\int_{\beta x}^{T'-\beta x} \left ( \rho (x) \psi_{t}\psi_{tx}+a(x)\psi_{x}\psi_{xx} \right ) dt\\
&~~~~~~=-\int_{\beta x}^{T'-\beta x} a'(x)\psi_{x}^2 dt +\rho (x) \psi_{t} \psi_{x} \Big \vert_{t=T'-\beta x}-\rho (x) \psi_{t} \psi_{x} \Big \vert_{t=\beta x}.\\
\end{aligned}
\end{equation*}
Combining this identity and \eqref{e411} and using the elementary inequality $$\lvert \rho (x) \psi_{t} \psi_{x} \rvert \le \frac{\beta}{2} [\rho (x) \psi_{t}^2+a(x) \psi_{x}^2],$$ we obtain
\begin{equation*}
\begin{aligned}
&\frac{dF(x)}{dx} \le \frac{1}{2}\int_{\beta x}^{T'-\beta x} \left (\rho'(x) \psi_{t}^2-a'(x)\psi_{x}^2 \right ) dt\\
&~~~~~~~~~ \le  \frac{1}{2} \max \left \{ \frac{\lvert \rho' \rvert}{\rho}, \frac{\lvert a' \rvert}{a} \right \} \int_{\beta x}^{T'-\beta x} \left (\rho(x) \psi_{t}^2+a(x)\psi_{x}^2 \right ) dt\\
&~~~~~~~~~=\max \left \{ \frac{\lvert \rho' \rvert}{\rho}, \frac{\lvert a' \rvert}{a} \right \}F(x).
\end{aligned}
\end{equation*}
Integrating this differential inequality with respesct to $x$, we get
\begin{equation*}
F(x) \le \exp \left ( \int_{0}^{x}\max \left \{ \frac{\lvert \rho'(s) \rvert }{\rho(s)}, \frac{\lvert a'(s) \rvert }{a(s)} \right \}ds \right) F(0).
\end{equation*}\\
From $$\int_{0}^{x}\max \left \{ \frac{\lvert \rho'(s) \rvert }{\rho(s)}, \frac{\lvert a'(s) \rvert }{a(s)} \right \}ds \le \frac{TV(\rho)}{\rho_0}+\frac{TV(a)}{a_0},$$ we get
\begin{equation}\label{e412}
F(x) \le a(0) \exp \left ( \frac{TV(\rho)}{\rho_0}+\frac{TV(a)}{a_0} \right) \int_{0}^{T}(\psi_{x}(0,t))^2dt,  ~~~\forall x \in [0,L].
\end{equation}
Integrating with respect to $x$ in $(0,L)$, we have
\begin{equation}\label{e413}
\begin{aligned}
& \frac{1}{2}\int_{\beta L}^{T'-\beta L}\int_{0}^{L}[\rho (x) (\psi_{t}(x,t))^2+a(x)(\psi_{x}(x,t))^2]dxdt\\
&~~~~~~~~~~~\le a(0)L \exp \left ( \frac{TV(\rho)}{\rho_0}+\frac{TV(a)}{a_0} \right) \int_{0}^{T}(\psi_{x}(0,t))^2dt.
\end{aligned}
\end{equation}
Because of the boundary condition $\psi(0,t)=0$, we can write that
\begin{equation*}
\begin{aligned}
&\int_{\beta L}^{T'-\beta L}(\psi(L,t))^2dt=\int_{\beta L}^{T'-\beta L} \left \{ \int_{0}^{L}\psi_{x}(x,t)dx \right \} ^2dt\\
&~~~~~~~~~~~~~~~~~~~~~~~~~\le L \int_{\beta L}^{T'-\beta L}\int_{0}^{L}\psi_{x}(x,t)^2dxdt\\
&~~~~~~~~~~~~~~~~~~~~~~~~~\le \frac{L}{\min \{\rho_0,a_0\} }\int_{\beta L}^{T'-\beta L}\int_{0}^{L}[\rho (x) (\psi_{t}(x,t))^2+a(x)(\psi_{x}(x,t))^2]dxdt.
\end{aligned}
\end{equation*}
Combining this inequality and \eqref{e413}, we get
\begin{equation}\label{e414}
\begin{aligned}
&\int_{\beta L}^{T}(\psi(L,t))^2dt \le \frac{1}{2}\int_{\beta L}^{T'-\beta L}(\psi(L,t))^2dt\\
&~~~~~~~~~~~~~~~~~~~~ \le \frac{a(0)L^2}{\min \{\rho_0,a_0\}} \exp \left ( \frac{TV(\rho)}{\rho_0}+\frac{TV(a)}{a_0} \right) \int_{0}^{T}(\psi_{x}(0,t))^2dt.
\end{aligned}
\end{equation}
On the other hand, from \eqref{e412} we have
\begin{equation*}
F(L) \leq  a(0)\exp \left ( \frac{TV(\rho)}{\rho_0}+\frac{TV(a)}{a_0} \right) \int_{0}^{T}(\psi_{x}(0,t))^2dt.
\end{equation*}
Therefore we can write
\begin{equation}\label{e415}
\begin{aligned}
&\rho (L)\int_{\beta L}^{T}(\psi_{t}(L,t))^2dt  \le \frac{\rho (L)}{2}\int_{\beta L}^{T'-\beta L}(\psi_{t}(L,t))^2dt\\
&~~~~~~~~~~~~~~~~~~~~~~~~~~~ \leq  a(0) \exp \left ( \frac{TV(\rho)}{\rho_0}+\frac{TV(a)}{a_0} \right) \int_{0}^{T}(\psi_{x}(0,t))^2dt
\end{aligned}
\end{equation}
where
\begin{equation*}
\psi_{t}(L,t)=\left\{
\begin{aligned}
&s'_0(t), ~~~ L\beta\le  t \le T\\
&0, ~~~~~~~~ 0\le t \le L\beta\\
\end{aligned}
\right.
\end{equation*}
Combining \eqref{e414} and \eqref{e415} and taking into account \eqref{e46} we get the desired observability inequality \eqref{e48} with
\begin{equation*}
C_1^2=\left( \frac{L^2}{\min \{\rho_0,a_0\}}+\frac{1}{\rho (L)} \right )a(0) \exp \left ( \frac{TV(\rho)}{\rho_0}+\frac{TV(a)}{a_0} \right).
\end{equation*}
Thus, the proof of Proposition \ref{pro41} is done.

\section{The constant coefficients wave equation}

In this section, we consider constant coefficients 1-d wave equation, \textit {i.e.} the particular case where $\rho \equiv a \equiv 1$, to show how the method of characteristics can be implemented to obtain the previous result:  
\begin{equation}\label{eq1}
\left\{~
\begin{aligned}
&y_{tt}-y_{xx}=0,&& 0< x< L,~ 0< t<T,\\
&y(x,0)=y_0(x), ~y_{t}(x,0)=y_1(x), && 0< x< L,\\
&y(0,t)=v(t), ~ y(L,t)=0,&& 0<t<T.
\end {aligned}
\right.
\end{equation} \\

The following Theorem \ref {thm_1} can be proved by the methods of characteristics in \cite {Libook},  states the existence of a control in spaces of smooth solutions.  The same construction can also be implemented in energy spaces.

\begin{theorem}\label{thm_1}
Assume that $y_0(x)=0$ and $y_1(x)=0$. Let
\begin{equation*}
\overline T>L
\end{equation*}
and $T$ be an arbitrarily given number such that 
\begin{equation*}
T>\overline T.
\end{equation*}
For any given function $q(t)\in C^1([\overline T, T])$, we can find a boundary control $v(t)\in C^2([0,T])$ such that the system \eqref{eq1} admits a unique solution $y\in C^2\big( [0,L]\times [0,T] \big)$ satisfying $y_{x}(L,t)=q(t)$ for all $\overline T\le t\le T$.
\end{theorem}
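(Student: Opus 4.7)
Because the equation is the standard constant-coefficient wave equation, the plan is to use the d'Alembert decomposition $y(x,t)=f(x+t)+g(x-t)$, which automatically solves $y_{tt}-y_{xx}=0$. The task then reduces to building $f\in C^2([0,L+T])$ and $g\in C^2([-T,L])$ so that the initial, boundary, and tracking constraints all hold; once this is done, the control is simply $v(t):=f(t)+g(-t)$ and $y$ inherits $C^2$ regularity on $[0,L]\times[0,T]$.

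First I would extract what is forced. The vanishing initial data give $f\equiv g\equiv 0$ on $[0,L]$. The homogeneous Dirichlet condition at $x=L$ reads $f(L+t)+g(L-t)=0$ for $t\in[0,T]$; taking $t\in[0,L]$ yields $f\equiv 0$ on $[L,2L]$, hence $f\equiv 0$ on $[0,2L]$. Differentiating this identity in $t$ gives $g'(L-t)=f'(L+t)$, so that $y_x(L,t)=f'(L+t)+g'(L-t)=2f'(L+t)$, and the tracking requirement $y_x(L,t)=q(t)$ on $[\overline T,T]$ translates into
\begin{equation*}
f'(s)=\tfrac{1}{2}\,q(s-L),\qquad s\in[L+\overline T,\,L+T].
\end{equation*}
Since $q\in C^1$, integrating (with one free constant $f(L+\overline T)=C$) defines $f$ as a $C^2$ function on $[L+\overline T,L+T]$.

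The key step exploits the waiting-time hypothesis $\overline T>L$, which makes the gap $[2L,L+\overline T]$ a non-degenerate interval free of constraints. On it I would freely prescribe a $C^2$ bridging function (for instance a Hermite quintic polynomial) matching $f=f'=f''=0$ at $s=2L$ and matching the values $C$, $q(\overline T)/2$, $q'(\overline T)/2$ at $s=L+\overline T$. This completes $f\in C^2([0,L+T])$. The boundary identity at $x=L$ then forces $g(s)=-f(2L-s)$ on $[L-T,0]$, which is automatically $C^2$ and is compatible with $g\equiv 0$ on $[0,L]$ (because $f\equiv 0$ on $[L,2L]$). On the remaining piece $[-T,L-T]$ no constraint from the problem appears, so I would extend $g$ by any $C^2$ function matching the already-determined values of $g,g',g''$ at the junction $s=L-T$.

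Finally, I would set $v(t)=f(t)+g(-t)$ on $[0,T]$, which is $C^2$ by construction and satisfies $v(0)=v'(0)=v''(0)=0$, the compatibility required for $y$ to be $C^2$ up to the corner $(0,0)$. A direct substitution then verifies that $y(x,t)=f(x+t)+g(x-t)$ lies in $C^2([0,L]\times[0,T])$ and meets the wave equation, the vanishing initial data, the homogeneous condition at $x=L$, and the prescribed flux $y_x(L,t)=q(t)$ for $t\in[\overline T,T]$; uniqueness is the standard well-posedness statement for the wave equation with prescribed Dirichlet data. The only real obstacle is the bookkeeping of characteristic segments and the three $C^2$-matching junctions (at $s=2L$, $s=L+\overline T$, and $s=L-T$); the role of the hypothesis $\overline T>L$ is precisely to create the free bridging interval $[2L,L+\overline T]$, without which one would be forced to impose artificial compatibility conditions such as $q(\overline T)=q'(\overline T)=0$.
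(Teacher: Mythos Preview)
Your argument is correct. The d'Alembert decomposition reduces everything to the bookkeeping you describe, and the crucial use of $\overline T>L$ to create the free bridging interval $[2L,\,L+\overline T]$ is exactly right; the $C^2$ matchings at the three junctions can indeed be realized by Hermite interpolants, and the remaining piece of $g$ on $[-T,\,L-T]$ is genuinely unconstrained because the only condition touching that range is the control identity $v(t)=f(t)+g(-t)$.

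The paper takes a different route: instead of d'Alembert it follows the constructive scheme of Li (solve a forward mixed problem on $[0,L]\times[0,L]$ with an auxiliary Dirichlet datum, record the resulting flux $a(t)=y_x(L,t)$, splice $a$ to $q$ by a $C^1$ bridge $z$ on $[L,\overline T]$, and then solve the wave equation \emph{sidewise} from $x=L$ with Cauchy data $(0,c(t))$, finally invoking a uniqueness argument on the triangle $\{0\le t\le x\le L\}$ to recover $y_t(x,0)=0$). Your bridging interval $[2L,\,L+\overline T]$ is just the paper's $[L,\overline T]$ under the shift $s=L+t$, so the two constructions are really the same object seen through different lenses. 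Your approach is more elementary and fully explicit---one could write the control in closed form---while the paper's sidewise-Cauchy formulation does not rely on an explicit representation formula and is the one that generalizes to variable coefficients and quasilinear systems.
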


\begin{proof}
The proof we present here is of constructive nature,  similar to those in \cite {Wang, Libook}.
We construct the solution to the control problem in the following steps. All along the proof $y_0(x)=0$ and $y_1(x)=0$. \\

 \textbf{Step 1:} Let $\overline T>L$ and $T$ be an arbitrarily given number such that $T>\overline T$. On the domain $\mathcal R(L)=\{(x,t):0\leq x \leq L,0\leq t \leq L\}$, we consider the following initial-boundary value problem for the equation \eqref{eq1} with an artificial boundary condition $y(0,t)=f(t)$
\begin{equation}\label{eq3}
\left\{~
\begin{aligned}
&y_{tt}-y_{xx}=0,&& 0< x< L,~ 0< t< L,\\
&y(x,0)=0, ~ y_{t}(x,0)=0, && 0\le x \le L,\\
&y(0,t)=f(t), ~ y(L,t)=0, && 0\le t \le L
\end{aligned}
\right.
\end{equation} \\
where $f(t)$  is an arbitrarily given $C^2([0,L])$ function, satisfying the compatibility conditions $f(0)=0, f'(0)=0$ and $f''(0)=0$.\\

By the theory on classical solutions for 1-D linear wave equations, this initial-boundary problem admits a unique solution on the domain $\mathcal R(L)$ (see Figure \ref{fig:fig4}(a)). Let  $y_f(x,t)$ denote the solution of the problem \eqref{eq3} for the boundary condition $f(t)$.

We introduce 
\begin{equation*}
 a(t)=\frac{\partial}{\partial {x}} y_f(L,t), ~~0 \le t\le L
\end{equation*}
which is in $C^1([0,L])$ and is uniquely determined by $f$ (see Figure \ref{fig:fig4}(b)).\\

\begin{figure}[t]
	\centering
		\includegraphics[width=0.8\textwidth]{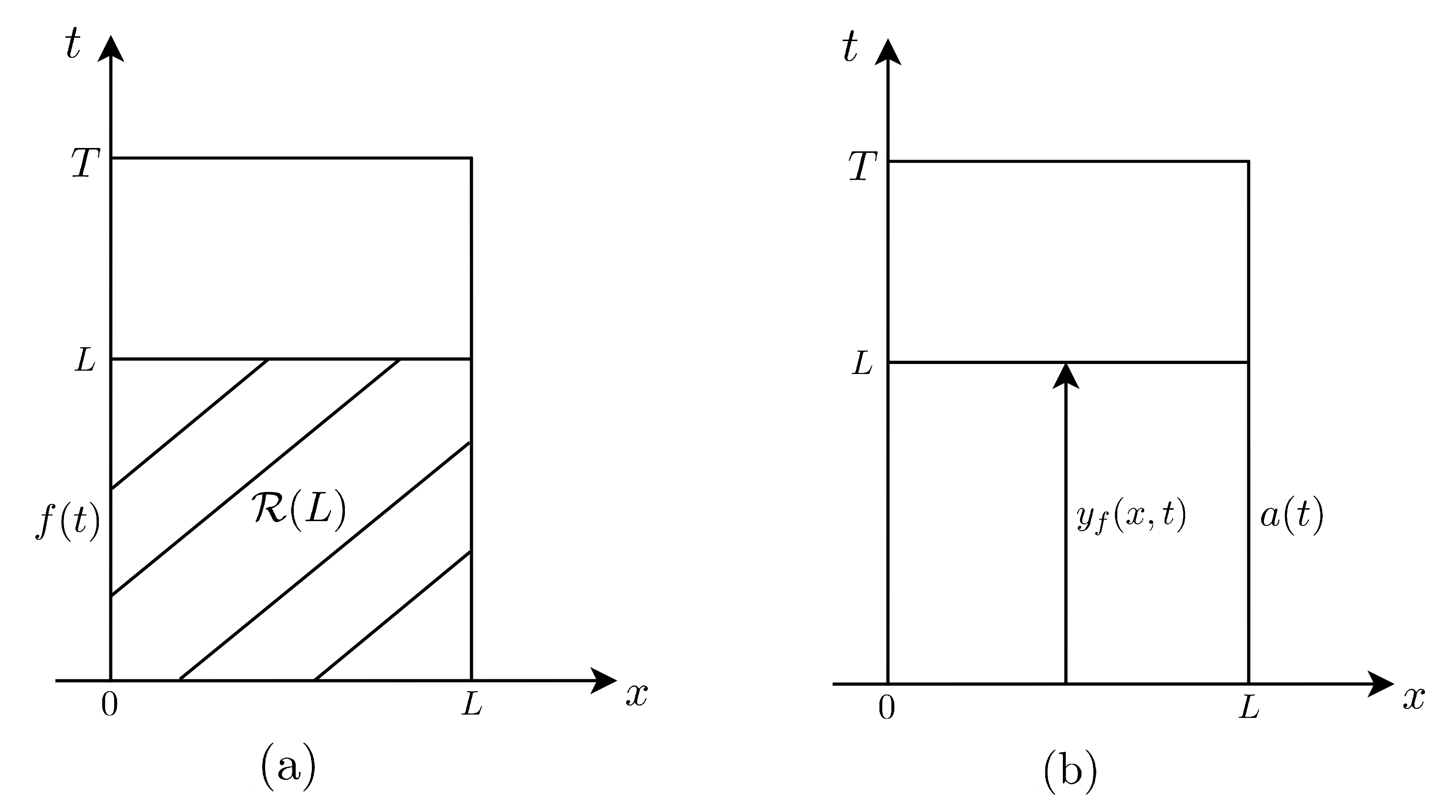}
        \caption{Construction of the first step of the proof.}
	\label{fig:fig4}
\end{figure}

Given $L>0$ and $\overline T>L$ we can define a function $c(t) \in C^1([0,T])$ so that
\begin{equation}\label{eq4}
c(t)=\left\{
\begin{aligned}
&a(t), && 0\leq t\leq L\\
&z(t), && L\leq t\leq \overline T\\
&q(t), && \overline T\leq t\leq T\\
\end{aligned}
\right.
\end{equation}
where $q$ is the given target function and $z \in C^1([L, \overline T])$ is any given function, satisfying the following conditions
\begin{equation*}
z(L)=a(L),~~~ z(\overline T)=q(\overline T),~~~  z'(L)=a'(L)~~ \text{and}~~  z'(\overline T)=q'(\overline T).
\end{equation*}

\bigskip

 \textbf{Step 2:} We change the role of $t$ and $x$ and consider a leftward initial-boundary value problem
\begin{equation}\label{eq5}
\left\{~
\begin{aligned}
&y_{xx}-y_{tt}=0, && 0< x< L,~ 0< t<T\\
&y(L,t)=0, ~y_{x}(L,t)=c(t), && 0\le t \le T,\\
&y(x,0)=0, ~ y(x,T)=\phi (x), && 0\le x\le L,
\end{aligned}
\right.
\end{equation} \\
on the domain $\mathcal R(T)=\{(x,t):0\leq x \leq L,~0\leq t \leq T\}$ where $\phi (x) \in C^2([0,L])$ is an arbitrary given function, satisfying $C^2$ compatibility conditions $\phi(L)=0$, $\phi'(L)=c(T)$ and $\phi''(L)=0$.\\

By the theory on classical solutions for 1-D linear wave equations, we know that this leftward initial-boundary value problem admits a unique solution $y=y(x,t)$ on the domain $\mathcal R(T)$ (see Figure \ref{fig:fig5}).

From definition of the function $c=c(t)$ it is clear that this solution $y=y(x,t)$ satisfies the desired condition $y_{x}(L,t)=q(t)$ for all $\overline T\le t\le T$.

\begin{figure}[H]
	\centering
		\includegraphics[width=0.4\textwidth]{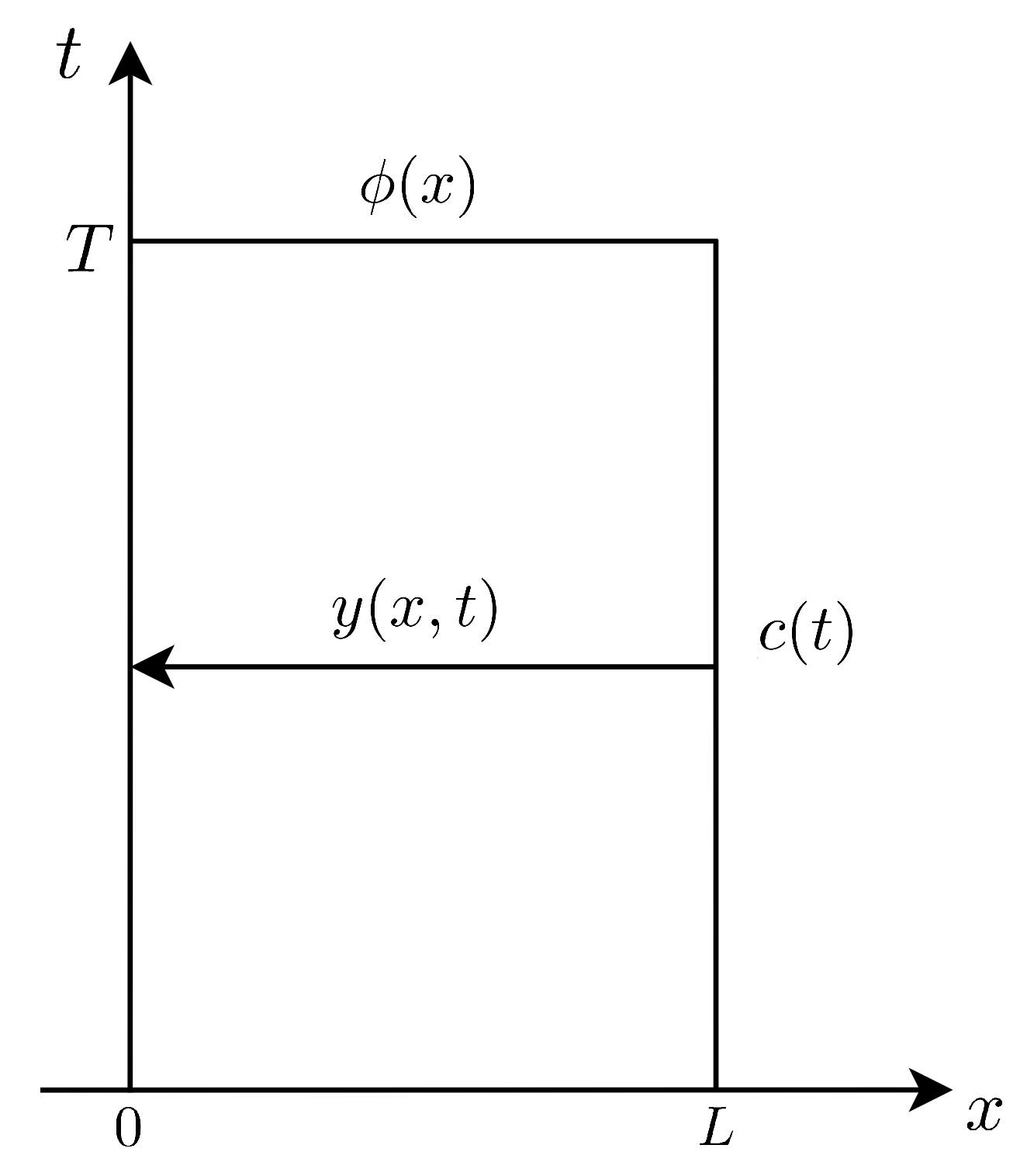}
	\caption{Leftward initial-boundary value problem on the domain $\mathcal R(T)$.}
	\label{fig:fig5}
\end{figure}
 
\bigskip

\textbf{Step3:} The solution $y=y(x,t)$ of the problem \eqref{eq5} satisfies the equation $y_{tt}-y_{xx}=0$ and the first initial condition $y(x,0)=0$ and the boundary condition $y(L,t)=0$ in the system \eqref{eq1}. We need to prove that it also satisfies the second initial condition
\begin{equation*}
y_{t}(x,0)=0.
\end{equation*}
If the second initial condition is indeed satisfied, then we get the desired control function $v$ by substituting $y=y(x,t)$ into the boundary condition in \eqref{eq1}. \\

Consider the following one-sided initial boundary value problem:
\begin{equation}\label{eq6}
\left\{~
\begin{aligned}
&y_{xx}-y_{tt}=0,&& 0< x< L,~ 0< t<x,\\
&y(L,t)=0, ~ y_{x}(L,t)=a(t), && 0\le  t\le x,\\
&y(x,0)=0,&& 0\le x\le L.
\end{aligned}
\right.
\end{equation} \\
Both the solution $y=y_f(x,t)$ of \eqref{eq3} and the solution $y=y(x,t)$ of \eqref{eq5} are solutions to the problem \eqref{eq6}. By the uniqueness of $C^2$-solution to one-sided problem on the domain $\{(x,t): 0\le x \le L, 0\le t \le x \}$ (see Figure \ref{fig:fig6}), $y( x,t)=y_f( x,t)$ on the interval $\{t=0,~0\leq x \leq L\}$. Thus, $y(x,t)$ satisfies the second initial condition $y_{t}(x,0)=0$ since $ \frac{\partial}{\partial {t}} y_f(x,0)=0$.\\

\begin{figure}[H]
	\centering
		\includegraphics[width=0.4\textwidth]{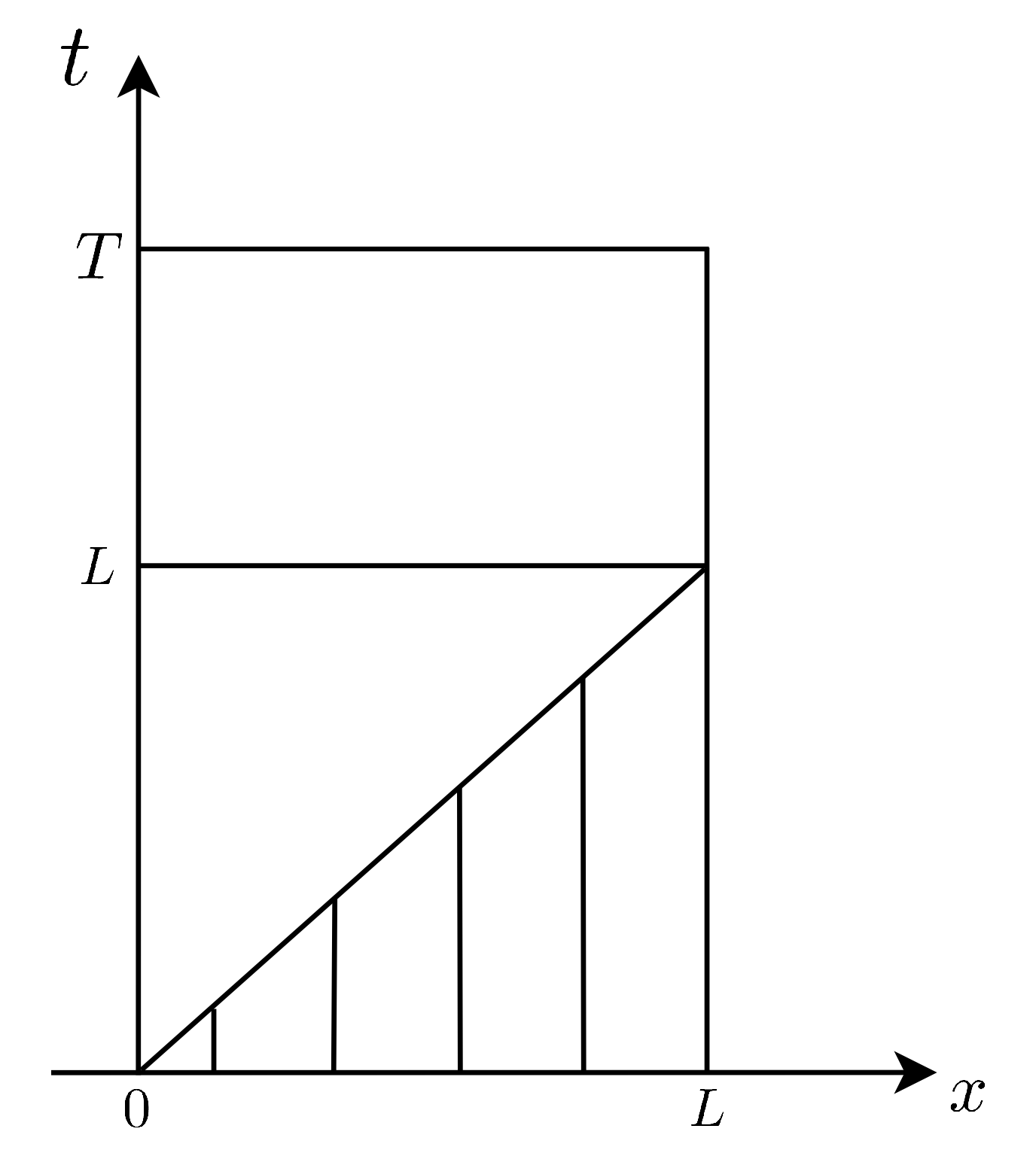}
        \caption{The domain $\{(x,t): 0\le x \le L, 0\le t \le x \}$ which is insensitive to the action of the control.}
	\label{fig:fig6}
\end{figure}

\end{proof}

\section{Multi-dimensional problems}  As explained in the introduction, the controllability theory of wave equations has been also developed in the multi-dimensional context. Duality arguments reduce the problem to boundary observability inequalities that can be obtained by different methods including multipliers, Carleman inequalities and microlocal analysis.

The sidewise control problem discussed in this paper can also be easily reformulated in the multi-dimensional frame. The duality method described here can also be applied, reducing the problem to the obtention of new sidewise observability inequalities.

However, adapting the existing techniques for the observability of waves to prove those sidewise observability inequalities for multi-dimensional wave equations seems to be a challenging problem that we present here in some more detail.

Let $\Omega$ be a bounded smooth domain of $\mathbb{R}^n$, in dimension $n\ge 2$, and consider the wave equation:
\begin{equation}\label{e1multi}
\left\{~
\begin{aligned}
&y_{tt}-\Delta y =0,  &&\hbox{ in } \Omega \times (0, T)\\
&y(x,0)=y_0(x), ~y_{t}(x,0)=y_1(x),&& \hbox{ in } \Omega\\ 
&y=u,              &&\hbox{ on } \Gamma_c\times (0, T) \\
&y=0,                 &&\hbox{ on } \Gamma_0\times (0, T).
\end {aligned}
\right.
\end{equation} 

Here $\Gamma_0$ and $\Gamma_c$ stand for a partition of the boundary, $\Gamma_0$ being the fix part of the boundary and $\Gamma_c$ the one under control. While the control $u$ acts on $\Gamma_c$, the subset of the boundary $\Gamma_0$ remains fixed, thanks to the homogeneous Dirichlet boundary conditions.

Given a smooth enough target profile $p: \Gamma_0\times(T_0, T)\to \mathbb{R}$  to be tracked,  with $T_0>0$ large enough depending on the geometry of the domain and the characteistic travel time in a way to be determined, the question is then to find a control $u$ in, say, $L^2(\Gamma_c \times (0, T))$, such that the solution $y$ of \eqref{e1multi} satisfies the condition
$$
\partial y/\partial \nu = p \quad \hbox{on} \, \Gamma_0\times(T_0, T).
$$
Here and in what follows $\nu$ denotes the outward unit normal vector and $\partial \cdot/\partial \nu$ the normal derivative.

This is the natural multi-d version of the sidewise controllability problem discussed above. In fact, the same formulation can be easily adapted to consider other models such as multi-dimensional heat, Schr\"odinger equations or the system of thermoelasticity (see \cite{survey}).

The sidewise controllability problem above can be reduced, by duality, to a new class of sidewise observability inequalities for the adjoint system
\begin{equation}\label{e1multi2}
\left\{~
\begin{aligned}
&\psi_{tt}-\Delta \psi =0,  &&\hbox{ in } \Omega \times (0, T)\\
&\psi(x,T)=\psi_{t}(x,T)=0,&& \hbox{ in } \Omega\\
&\psi=0,              &&\hbox{ on } \Gamma_c\times (0, T) \\
&\psi=s,                 &&\hbox{ on } \Gamma_0\times (0, T).
\end {aligned}
\right.
\end{equation} 
Here $s=s(x, t)$ is a smooth boundary condition given on $\Gamma_0\times (0, T)$. The question is then  whether one can prove the existence of an observability constant $C>0$ such that
\begin{equation}
||s||^2_{*} \le C ||\partial \psi/\partial \nu||_{L^2( \Gamma_c\times (0, T))}^2
\end{equation}
for every solution of this adjoint system.

Observe that here the norm $||\cdot ||_*$ in this inequality is to be identified both in what corresponds the Sobolev regularity and the support within $\Gamma_0 \times (T_0, T)$.

 As we mentioned above the existing techniques do not seem to yield this kind of inequalities in a direct manner.
 
 However, as described in \cite{L2}, using the Holmgren's uniqueness Theorem, a unique continuation property can be easily proved. This constitutes a weaker and non-quantitative version of this kind of inequality. 
 
 Indeed, it can be easily proved, using the same extension by parity with respect to $t=T$ as above,  that, as soon as $\partial \psi/\partial \nu \equiv 0$ in $\Gamma_c\times (0, T)$ and $T$ is large enough, one can guarantee that $s\equiv 0$ provided its support is localized in a subset of the boundary of the form $\gamma \times (\tau, T)$, with $\gamma$ a suitable open subset of $\Gamma_0$ and $0 < T_0 <T$. Both $\gamma$ and $T_0$  can be easily characterized it terms of the cones of influence and dependence of solutions of the wave equation. Essentially, $\gamma$ is constituted by the points for which the geodesic distance (within $\Omega$)  to $\Gamma_c$ is less than $T_0$. 
 
 Obviously, for this result to be active in some effective subset $\gamma \times (T_0, T)$, one needs $T$ to be large enough, in particular $T>T_0$, where $T_0$ is the minimal geodesic distance from $\Gamma_c$ to $\Gamma_0$.

 This unique continuation result assures, in the corresponding geometric setting, that the wave equation enjoys the property of sidewise approximate controllability: \textit {i.e.} that given any $p\in L^2(\gamma \times (T_0, T))$ and any $\epsilon >0$ there exists a control $u\in L^2(\Gamma_c \times (0, T))$ (depending on $\epsilon$) such that the corresponding solution $y$ satisfies
$$
||\partial y/\partial \nu - p ||_{L^2(\gamma \times (T_0, T))} \le \epsilon.
$$

This result can be viewed as a partial extension of the 1-D results in this paper  to the multi-dimensional case. Note however that these arguments, based purely on Holmgren uniqueness, do not yield any quantitative estimates.

The systematic analysis of these problems in the multi-dimensional context for the wave equation and other relevant models constitutes a very rich source of interesting open problems.

\section{Conclusions and other open problems}

In this paper we have proved the sidewise controllability of the 1-D wave equation with $BV$ coefficients. This was done superposing, on one hand, a dual formulation of the problem, which leads to a novel sidewise observability inequality that, on the other hand, we prove by sidewise energy estimates that have been previously developed and implemented in the context of control of 1-D wave equations.

The methods and results in this paper lead to some interesting open problems and could be extended in various directions that we briefly describe now:
\begin{enumerate}
\item{\bf Optimal control.} Instead of consider the sidewise controllability problem in this paper one could adopt a more classical optimal control approach. The problem then could be formulated as that in which one minimizes a functional of the form
$$
\frac{1}{2} \left [\int_0^T u^2(t) dt + \kappa ||y(L, t)-p(t)||_{H^{-1}(L\beta, T)}^2\right ],
$$
depending on $u\in L^2(0, T)$, with $\kappa>0$ any penalty parameter.

Optimal controls for this problem exist for all $T>0$. This is simply due to the quadratic structure of the functional to be minimized, its coercivity and continuity.

The controllability problem discussed here is a singular limit as $\kappa \to \infty$. But of course this limit process depends on the length of the time interval $T$ since, as we have seen, one can only expect the sidewise target to be reached exactly when $T>L\beta$.

For this optimal control problem, when the time horizon $T>0$  is long enough, turnpike properties were proved in \cite{Gugat2}, \cite{GTZ}. They assert that, when $T\to \infty$, the optimal control and optimal trajectories are close to the steady state ones (are time-independent), in most of the time horizon $[0, T]$, except for some exponential boundary layers at $t=0$ and $t=T$.

\item{\bf Other boundary conditions.} For the sake of simplicity, in this paper the case of Dirichlet boundary conditions has been addressed. But similar problems are relevant with other boundary conditions. Our techniques apply in those cases too.

One could for instance consider the same model with Neumann boundary conditions and control:
\begin{equation}\label{e1N}
\left\{~
\begin{aligned}
&\rho (x)y_{tt}-(a(x)y_{x})_{x}=0,  && 0< x< L,~ 0< t<T,\\
&y(x,0)=y_0(x), ~y_{t}(x,0)=y_1(x),&& 0< x< L,\\
&y_x(0,t)=u(t), ~ y_x(L,t)=0,                 &&0<t<T.
\end {aligned}
\right.
\end{equation} 
In this case the problem consists on, given a time-dependent function $p=p(t)$, to find a control $u=u(t)$ such that the corresponding solution fulfills:
\begin{equation}\label{e2N}
y(L,t)=p(t), \quad t \ge 0
\end{equation}
Our methods apply in this case too, leading to similar results with minor changes.

\item {\bf Less regular coefficients.} The methods developed in this paper combined with those of \cite{FZ1} allow to consider coefficients with slightly weaker regularity and obtain sidewise controllability results for more smooth Sobolev targets. Note, however, that the results in \cite{CZ} can also be adapted to show that in the class of H\"older continuous coefficients one cannot expect such results for targets in Sobolev classes.

\item {\bf Non-harmonic Fourier series.}The results presented in this paper could be also obtained using other genuinely 1-D methods such as the  D'Alembert formula or Fourier series representation methods. 

When dealing with Fourier series the sidewise observability inequality proved above leads to new variants of the classical Ingham inequalities (see \cite{M-E}). It would be interesting to see if they can be obtained directly by non-harmonic Fourier series methods.

\item {\bf Nonlinear problems.} The results in this paper, combined with those in \cite{E1}, allow to extend our sidewise controllability result for semilinear wave equations of the form 
$$
\rho (x)y_{tt}-(a(x)y_{x})_{x} + f(y)=0
$$
with $f$ a locally Lipschitz nonlinearity such that
$$
\lim_{|s|\to \infty} \frac{f(s)}{s\log^2(s)} 
$$
is small enough (see also \cite{Cannarsa}).

These techniques, combined with linearization techniques could also be useful to handle quasilinear problems in the regime of small amplitud smooth solutions, to give an alternative proof to results similar to those in \cite{Gugat}. But this would require further work.

\item{\bf Transmutation.} As explained in \cite{Cara}, transmutation techniques can be used to transfer controllability properties of wave equations into heat equations. It would be interesting to analyse whether this can be done in the context of the sidewise controllability/observability of the heat equation. 

Recall, as mentioned above, that the sidewise controllability of the 1-D heat equation has been directly addressed in \cite{Barcena} using flatness methods in 
Gevrey classes.

\item{\bf Networks.} The techniques in this paper can be applied for 1-D wave propagation on networks. It could for instance allow to handle the case of a tree-shaped network with active controls on all but one free ends (\cite{LLS}). But adapting them to more general networks, or to the case of fewer controls, would require substantial further developments in combination with graph and diophantine theory (\cite{Dager}).

\item{\bf Numerical analysis.} Most of the methods developed for the numerical controllability (\cite{E1}) of the wave equation can also be applied in the context of sidewise controllability. But this would require a careful adaptation since, most often, the needed numerical results are achieved using Fourier series techniques, and not the sidewise energy estimates presented here, that do not hold in the discrete setting.

\item{\bf Fourier series interpretation.} We refer to \cite{EZ2021} for various extensions of the results presented in this paper, and their interpretation in terms of harmonic and non-harmonic Fourier series, which leads to  interesting and challenging open problems.

\end{enumerate}

\bigskip

\noindent
\textbf{Acknowledgments}: Authors thank Martin Gugat (FAU) for fruitful discussions.

This work was done while the first author was visiting FAU-Erlangen during a sabbatical year. 

The second author has received funding from  the European Research Council (ERC) under the European Union’s Horizon 2020 research and innovation programme (grant agreement NO. 694126-DyCon), the European Union’s Horizon 2020 research and innovation programme under the Marie Sklodowska-Curie grant agreement No.765579-ConFlex.D.P., the Alexander von Humboldt-Professorship program, the Transregio 154, Mathematical Modelling, Simulation and Optimization using
 the Example of Gas Networks,
  of the German DFG, project C08 and by grant MTM2017-92996-C2-1-R COSNET of MINECO (Spain) .

\end{document}